\newtheorem{theorem}{Theorem}
\newtheorem{remark}{Remark}
\def\R{{\rm I\!R}}
\def\argmin{\mathop{\rm arg\,min}}
\def\argmax{\mathop{\rm arg\,max}}
\newcommand{\e}{{\bf e}}
\newcommand{\h}{{\bf h}}
\renewcommand{\t}{{\bf t}}
\renewcommand{\u}{{\bf u}}
\renewcommand{\v}{{\bf v}}
\newcommand{\x}{{\bf x}}
\newcommand{\y}{{\bf y}}
\newcommand{\blambda}{\mbox{\boldmath{$\lambda$}}}
\newcommand{\bmu}{\mbox{\boldmath{$\mu$}}}
\newcommand{\bnu}{\mbox{\boldmath{$\nu$}}}
\newcommand{\bz}{{\bf 0}}
\title{Convex relaxation for finding planted influential nodes
in a social network\thanks{Supported
in part by a grant from the U.~S.~Air Force Office of Scientific Research and in part
by a Discovery Grant from the Natural Sciences and Engineering Research Council
(NSERC) of Canada.}}
\author{Lisa Elkin\thanks{Department of
Combinatorics and Optimization, University of Waterloo, 200 University Ave.~W.,
Waterloo, Ontario, Canada, N2L3G1, {\tt laelkin@uwaterloo.ca}.}
\and
Ting Kei Pong\thanks{Department of
Combinatorics and Optimization, University of Waterloo, 200 University Ave.~W.,
Waterloo, Ontario, Canada, N2L3G1, {\tt tkpong@gmail.com}.}
\and
Stephen A.~Vavasis\thanks{Corresponding author.
Department of
Combinatorics and Optimization, University of Waterloo, 200 University Ave.~W.,
Waterloo, Ontario, Canada, N2L3G1, {\tt vavasis@uwaterloo.ca}.}}
\begin{document}
\maketitle
\begin{abstract}
We consider the problem of maximizing influence in a social network.
We focus on the case that the social network is a directed bipartite
graph whose arcs join senders
to receivers.
We consider both the case of deterministic networks
and probabilistic graphical models, that is, the so-called ``cascade'' model.
The problem is to find the set of the $k$ most influential
senders for a given integer $k$.
Although this problem is NP-hard, there is a polynomial-time
approximation algorithm due to Kempe, Kleinberg and Tardos.  In this work we
consider convex relaxation for the problem.  We prove that convex optimization
can recover the exact optimizer
in the case that the network is constructed
according to a generative model in which influential nodes are planted but then
obscured with noise.   We also demonstrate
computationally that the convex relaxation can succeed on a more realistic generative
model called the ``forest fire'' model.
\end{abstract}

\section{Influence in social networks}
The formation and growth of vast on-line social networks
in the past decade has fueled substantial research into the problem
of identifying influential members in these networks.
An obvious application is determining how
to quickly spread an urgent message over a social network.
Another obvious
application of this research is to determine optimal members of a social network
for advertisers to target.
Social network research has
also been applied to model the spread of health problems
by epidemiologists \cite{Christakis}, in which
case influential nodes would correspond to the persons most in need
of medical intervention.

For the purpose
of this work, we regard a social network as a directed graph.  An arc
represents a communication link between a sender and receiver.
In the case of a general directed graphs, nodes can be both
senders (outdegree $\ge 1$) and receivers (indegree $\ge 1$).
The network passes through a discrete sequence of states.  At
each discrete state, some of the nodes possess a message.  When the network advances
to its next state,  a node with a message may pass this message along outgoing arcs
according to a postulated model for message transmission.
In this work, we will consider a deterministic model and a
probabilistic model.  With these specifications in place, it is now
possible to pose the question of finding the $k$ most influential
nodes in the network.  In other words, given an integer $k$, find the
subset of $k$ nodes such that if a message is seeded at those nodes,
the largest number (or expected largest number) of receivers are
eventually reached at later time steps.

This problem was first investigated in depth in an
influential 2003 paper by Kempe, Kleinberg and
Tardos \cite {Kempe}.  They note
that the problem is NP-hard.  Their main contribution is a
polynomial-time algorithm based on sampling and on the greedy method
for finding an approximate solution to the maximization problem that
is within 63\% of optimum.  Their algorithm is valid for two different
probabilistic communication models.

We adopt the following point of view.
We specialize to the bipartite case, that is,
the graph consists of distinct senders (indegree $= 0$)
and receivers (outdegree $ = 0$) connected by a single layer of arcs.
As we explain below, in the deterministic case, this specialization is
without loss of generality.  Within this framework, we propose a generative
model in which the senders are either planted influencers or subordinates and
the receivers are partitioned into interest groups.  The network
of influencers, subordinates and interest groups is, however, partly
obscured by noise arcs.

We then propose a natural convex relaxation for the problem.  We show that
the convex relaxation is able to recover the planted influencers with
high probability assuming the parameters of the generative model lie in
certain ranges.  We have separate results for the deterministic graph
model and the probabilistic graph model, known as the ``independent cascade''
model.

This line of analysis fits into a recent body of results showing that
many NP-hard problems can be solved in polynomial time using convex
relaxation assuming the data is generated in a certain way.  A notable
pioneering work in this regard was the discovery of ``compressive
sensing'' by Donoho \cite{donoho-underdet} and by Cand\`es and Tao
\cite{CandesTao2}.  This line of attack has also been used to analyze
problems in data mining including the clustering problem
\cite{AmesVavasis2, Ames} and nonnegative matrix factorization
\cite{DoanVavasis}. The rationale for this line of work is that, although
the problems under consideration are NP-hard, it may still be possible
to solve them in polynomial time for `realistic' data, i.e., data
arising in real-world applications.  The reason is that realistic data
may possess properties that make the problem of finding hidden structure
more tractable
than in the case of data constructed by an adversary (as in an NP-hardness proof).  One
way to make progress in this regard is to postulate a generative model
for the data that attempts to capture some real-world characteristics.

In the next section, we focus on the deterministic case of the problem.
The more widely used (and presumably realistic) probabilistic model
is then analyzed in Section~\ref{sec:probmodel}.  Finally, in
Section~\ref{sec:forestfire} we consider the even more realistic ``forest
fire'' model of social networks.  We are not able to analyze this model, but we show
with computational results that the convex relaxation holds promise
for this case as well.

\section{Deterministic graph model}
In this section we postulate a deterministic model of a social network, that is,
each communication link passes messages from its tail to its head with probability
equal to 1.  In this model, the problem of influence
maximization is formally stated as follows.  Given a directed
graph $G=(V,E)$, and given an integer $k$, find a subset
$V^*\subset V$ such that $|V^*|=k$ and, subject to this constraint,
$|\Delta(V^*)|$ is maximum.  Here, $\Delta(V^*)$ denotes the subset of $V$
containing nodes
reachable by a directed path that begins from a node of $V^*$.

It is not hard to see that one can replace the original
network (an arbitrary directed graph) with a bipartite network.  In
particular, make two copies of each node (the `sender copy' and
the `receiver copy'), and put an arc $(i,j)$ in the resulting graph
whenever $i$ is the sender copy of an original node $i_0$, $j$ is
the receiver copy of original node $j_0$, and the original network has a directed
path (possibly of length 0 if $i_0=j_0$) from $i_0$ to
$j_0$.  This reduction to the bipartite case causes a blow-up of at
most quadratic size and hence does not affect the polynomial
solvability of the problem.

It is also easy to see that the bipartite deterministic case is essentially
equivalent to the classic set-cover problem, which is one of Garey and Johnson's
\cite{GJ79}
original NP-hard problems.  This shows that the problem of finding
the $k$ most influential nodes of a social
network, even in this apparently simplified case,
is NP-hard.

We now describe a particular class of bipartite deterministic
networks suitable for analysis.  Let the graph be
denoted
$G = (V_1,V_2,E)$.  The nodes of $V_1$, which are the {\em senders}, consist
of
{\em influencers} and {\em subordinates}.
The nodes in $V_2$ are called {\em receivers}.  All arcs in $E$ are directed
from $V_1$ to $V_2$.

We suppose that $V_1$ is partitioned into
$k$ disjoint {\em interest groups} $L_1,\ldots,L_k$,
each having a single influencer and $r_l\ge 0$ subordinates for $l = 1,\ldots,k$.
We suppose that $V_2$ is also partitioned into $k$
interest groups, say $V_2=G_1\cup\cdots\cup G_k$,
and let
$n_l=|G_l|$ for each $l$.
For notational convenience, we assume further
that the nodes in $V_1$ (resp., $V_2$) are arranged
according to the order of $L_1,\ldots,L_k$ (resp., $G_1,\ldots,G_k$),
and that within each interest group in $V_1$,
the first node is always the influencer.

We start by considering the following assumptions on the influencers and subordinates,
which corresponds to the noiseless case.  This is an easy case that will
clarify our assumptions and notation.
\begin{enumerate}
  \item[{\bf {A}1}] There is an arc
from the influencer in group $L_l$ to every receiver in $G_l$, $l=1,\ldots,k$.
  \item[{\bf {A}2}] There are no arcs outside interest group boundaries,
i.e., there is no arc from $L_l$ to $G_{l'}$ if $l\ne l'$.
  \item[{\bf {A}3}] Each subordinate in $L_l$ is adjacent to a proper
subset of $G_l$.
\end{enumerate}
It is readily apparent from these assumptions
that the solution to the problem of finding the $k$
most influential nodes is to take the $k$ influencers.

Let $A$ be the $|V_1|\times |V_2|$ matrix whose $(i,j)$th entry is $1$ if
there is an arc from the $i$th node in $V_1$ to
the $j$th node in $V_2$. Also, let $\x\in \R^{|V_1|}$ denote an
indicator vector for a node in $V_1$, and let $\x^*$
be the indicator vector corresponding to the influencers. Then it is clear from
assumptions ${\bf A1}$ through ${\bf A3}$ that the vector $\x^*$
is an optimal solution to the following integer programming problem:
\begin{equation*}
    \begin{array}{rl}
      \max\limits_{\x,\t} & \e^T\t\\
      {\rm s.t.} & \t \le A^T\x,\\
                 &  \bz \le \t\le \e,\\
                 & \e^T \x = k,\\
                 & \x \in \{0,1\}^{|V_1|},
    \end{array}
\end{equation*}
where $\e$ is the vector of all ones, with appropriate dimension.
This integer LP models the problem of
finding the $k$ most influential nodes.
The variable $\x$ contains a `$1$' entry for the
selected nodes
in $V_1$ and $0$ else.  The variable $\t$ can only be nonzero at a receiver
adjacent to a selected node, and is 0 else.  The objective is to maximize
the number of `1' entries in $\t$ subject to the constraint that only $k$
entries of $\x$ may be set to 1.  It is not hard to see that
vector $\t$ will always be integral
at the optimizer, so there is no need for an additional integrality constraint.

An equivalent version of the above maximization problem
maximizes the nonsmooth continuous function $\e^T(\min\{\e,A^T\x\})$ over the
discrete set $\Omega:=\{\x \in \{0,1\}^{|V_1|}:\; \e^T\x = k\}$.
Since $|\Omega| = \binom{n}{k}$, the above integer programming problem can be solved by a brute-force function evaluation
approach in polynomial time when $k = O(1)$.
However, this approach can be inefficient when $k$ is large.

Thus, we consider the following simple convex relaxation:
\begin{equation}\label{LP_basic}
    \begin{array}{rl}
      \max\limits_{\x,\t} & \e^T\t\\
      {\rm s.t.} & \t \le A^T\x,\\
                 & \bz\le \t\le \e,\\
                 & \e^T \x = k,\\
                 & \bz\le \x\le \e.
    \end{array}
  \end{equation}
We would like to study when \eqref{LP_basic} has $\x = \x^*$ as its unique solution.
In this case, the relaxation is tight and
the influencers can be identified by solving the linear program \eqref{LP_basic},
which can be solved by interior point methods in polynomial time.

We have the following result.
\begin{theorem}\label{thm1}
  Assume {\bf A1} through {\bf A3}. Then $(\x^*,\e)$
is the unique solution of \eqref{LP_basic}.
\end{theorem}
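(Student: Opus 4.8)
The plan is to first certify optimality of $(\x^*,\e)$ by a global objective bound, and then leverage that bound to pin down any optimizer. Since the constraints include $\t\le\e$, every feasible point satisfies $\e^T\t\le\e^T\e=|V_2|$. I would check that $(\x^*,\e)$ attains this value: by {\bf A2} only senders of $L_l$ reach receivers in $G_l$, and by {\bf A1} the influencer of $L_l$ reaches all of $G_l$, so $A^T\x^*\ge\e$ and $(\x^*,\e)$ is feasible with objective $|V_2|$. Hence it is optimal. Moreover, any optimal $(\x,\t)$ must have $\e^T\t=|V_2|$, which combined with $\t\le\e$ forces $\t=\e$; thus the $\t$-component of an optimizer is automatically unique, and it remains only to show the $\x$-component must be $\x^*$.

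With $\t=\e$, the constraint $\t\le A^T\x$ becomes the covering condition $A^T\x\ge\e$, i.e.\ $\sum_{i:\,(i,j)\in E}x_i\ge 1$ for every receiver $j$. I would introduce the per-group weight $w_l:=\sum_{i\in L_l}x_i$, so that $\sum_{l=1}^{k}w_l=\e^T\x=k$. Fix any receiver $j\in G_l$ (one exists since $G_l\ne\emptyset$, which is implicit in {\bf A3}). By {\bf A2} every sender adjacent to $j$ lies in $L_l$, so the covering constraint at $j$ gives $w_l\ge\sum_{i:\,(i,j)\in E}x_i\ge 1$. Since this holds for each of the $k$ groups and the $w_l$ sum to $k$, every $w_l$ equals exactly $1$.

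Finally I would use {\bf A3} to concentrate each group's unit of weight on its influencer. As $w_l=1$, the covering inequality at a receiver $j\in G_l$, namely $\sum_{i:\,(i,j)\in E}x_i\ge 1=w_l$, together with $\x\ge\bz$ forces $x_i=0$ for every $i\in L_l$ that is \emph{not} adjacent to $j$. By {\bf A3} each subordinate $s\in L_l$ is adjacent only to a proper subset of $G_l$, hence misses some receiver $j_s\in G_l$; applying the previous observation at $j=j_s$ yields $x_s=0$. Thus all of the group's weight $w_l=1$ rests on the influencer, giving $\x=\x^*$, and uniqueness follows. This argument is essentially a complementary-slackness computation, so I do not anticipate a serious obstacle; the only points needing care are that \emph{uniqueness} (not merely optimality) must be extracted, which is exactly where the tightness $w_l=1$ is combined with {\bf A3}, and the mild nondegeneracy $n_l\ge 1$ that makes the single-receiver bound available.
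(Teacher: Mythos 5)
Your proof is correct, but it takes a genuinely different route from the paper's. The paper proves both optimality and uniqueness through an explicit dual certificate: it constructs KKT multipliers $(\blambda^*,\bmu^*,\bnu^*,\xi^*)$ as in \eqref{lambdamunuxi}, verifies the conditions \eqref{opt}, and then extracts uniqueness by noting that any optimizer must satisfy complementary slackness against this \emph{fixed} dual vector, so ${\bnu^*}^T(\e-\x^\diamond)=0$ forces $\x^\diamond=\x^*$. You instead argue entirely on the primal side: the trivial bound $\e^T\t\le\e^T\e$ together with feasibility of $(\x^*,\e)$ (via {\bf A1}) gives optimality and forces $\t=\e$ in every optimizer; then the covering condition $A^T\x\ge\e$, assumption {\bf A2}, and the budget $\e^T\x=k$ force each group weight $w_l=\sum_{i\in L_l}x_i$ to equal exactly $1$; finally {\bf A3} supplies, for each subordinate, a receiver it misses, and the tightness $w_l=1$ at that receiver kills the subordinate's weight. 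Every step checks out. One quibble: despite your closing remark, your argument is not really a ``complementary-slackness computation''---no dual variables appear anywhere---and that is precisely its appeal: it is more elementary and self-contained than the paper's. What the paper's heavier machinery buys is reusability: as the authors say explicitly, their argument ``is more complicated than necessary'' but is designed so that the same dual-certificate template extends to the noisy setting of Theorem~\ref{thm:detnoise}, where your counting argument would not close (with noise arcs and the group $G_0$, the optimal $\t$ is no longer $\e$ and the optimal value is not known a priori, so the objective-bound step fails). Finally, you are right that both proofs tacitly need $n_l\ge 1$ for every $l$ (with an empty receiver group, uniqueness can genuinely fail); you flag this explicitly, whereas in the paper it is implicit in the statement that $G_1,\ldots,G_k$ partition $V_2$.
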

\begin{proof}
  We first prove that $(\x,\t) = (\x^*,\e)$ is optimal for \eqref{LP_basic}.
The argument that we now present for optimality is more complicated than
necessary, but the same argument will later establish uniqueness and
also be used in the more general case below.
We note first that the feasible set of \eqref{LP_basic} is
nonempty and compact
  and thus an optimal solution exists. Furthermore, a feasible solution $(\x,\t)$
of \eqref{LP_basic} is optimal if and only if there exist $(\blambda,\bmu,\bnu,\xi)$
  satisfying the following Karush-Kuhn-Tucker (KKT) conditions:
  \begin{align}\label{opt}
  \begin{array}{rcr}
    \blambda^T(\t - A^T \x)  = 0,      &&   \x^T(-A\blambda + \bnu + \xi \e)  = 0,\\
    \bmu^T(\e - \t)  =0,               && \blambda + \bmu  \ge \e,\\
    \bnu^T(\e - \x)  = 0,              && -A\blambda + \bnu + \xi \e  \ge \bz,\\
    \t^T(\blambda + \bmu - \e)  = 0,    && \blambda \ge \bz,  \bmu \ge \bz, \ \bnu\ge \bz.
  \end{array}
  \end{align}
  We shall show that $(\x,\t) = (\x^*,\e)$ is optimal by
exhibiting explicitly a quadruple $(\blambda^*,\bmu^*,\bnu^*,\xi^*)$ satisfying the above conditions.

  To proceed, set $\delta = (\max_{1\le l\le k}\{n_l\})^{-1}$ and define
  \begin{equation}\label{lambdamunuxi}
    \blambda^* = \begin{pmatrix}
      \e/n_1\\ \vdots \\ \e/n_k
    \end{pmatrix},\ \ \bmu^* = \e - \blambda^*, \ \ \bnu^* = \delta \x^* \ {\rm and}\
 \xi^* = 1 - \delta.
  \end{equation}
  Then for those $i$ such that $x^*_i>0$ (influencers), we have $(-A\blambda^* +
\bnu^* + \xi^* \e)_i = 0$, while for
  those $i$ such that $x^*_i = 0$ (subordinates), from assumption {\bf A3}, we have
  \[
  (-A\blambda^* + \bnu^* + \xi^* \e)_i \ge -1+\frac{1}{\max_{1\le l\le k}n_l} + 1-\delta = 0.
  \]
  From these and the definitions of $(\blambda^*,\bmu^*,\bnu^*,\xi^*)$,
it is routine to show that the conditions in \eqref{opt} are satisfied.
  Thus, $(\x^*,\e)$ is optimal for \eqref{LP_basic}.

  We now show that $(\x^*,\e)$ is the unique optimal solution
for \eqref{LP_basic}. Suppose that $(\x^\diamond,\t^\diamond)$ is an optimal solution
  for \eqref{LP_basic}. Since $\bz\le \t^\diamond \le \e$ and the
optimal value of \eqref{LP_basic} has to be $\e^T\e = \sum_{l=1}^kn_l$, it follows
  immediately that $\t^\diamond = \e$. Furthermore, from saddle point theory,
$(\x^\diamond,\t^\diamond)$ together with the $(\blambda^*,\bmu^*,\bnu^*,\xi^*)$
  constructed above has to satisfy \eqref{opt}. In particular, it follows
from ${\bnu^*}^T(\e-\x^\diamond) = 0$ that $\x^\diamond$ must
  equal 1 for the $k$ entries corresponding to influencers. This together with feasibility of $\x^\diamond$ gives $\x^\diamond = \x^*$.
  This completes the proof.
\end{proof}

We now extend the above model with the addition of noise arcs.  In particular,
we make the following new assumptions, which allow some receivers to receive
from senders in multiple groups, and which also allow some receivers not
to be in any group.

\begin{itemize}
\item[\bf A1']
The receivers are partitioned
as $G_0\cup G_1\cup\cdots\cup G_k$.  The $l$th influencer is adjacent
to all receivers of $G_l$, $l=1,\ldots k$.  Receivers in $G_0$ are not
adjacent to any influencer.

\item[\bf A2']  For each $G_l$, $l=1,\ldots, k$, there exists $H_l\subset G_l$
such that receivers in $H_l$ are adjacent only to senders from group $l$.
Say $|H_l|= \theta_ln_l$ (recall $n_l=|G_l|$), with $0<\theta_l\le 1$.

\item[\bf A3'] A subordinate in group $l$
is adjacent to at most $\beta_l \theta_l n_l$ receivers of $H_l$
($l=1,\ldots, k$; $0<\beta_l<1$).

\end{itemize}

For {\bf A2'} and {\bf A3'}, it is assumed that $\theta_l$ is chosen
so that $\theta_ln_l$ is integral.
Note one difference between {\bf A3} and {\bf A3'}: in {\bf A3}, we allow
for subordinates to be adjacent to all but one receiver of $G_l$, whereas in {\bf A3'}
the restriction is strengthened to at most a constant factor subset of $H_l$.

Clearly, smaller values of $\theta_l$ and larger numbers of receivers
in $G_0$  corresponds to greater amounts of noise.
The recovery theorem for this case is as follows.

\begin{theorem}
Assume {\bf A1'} to {\bf A3'}.  Let $\rho=\min_l{\theta_l}/\max_l{\theta_l}$.
For a subordinate $i$, let $z_i$ denote the number of $G_0$ nodes
adjacent to $i$.  Let $n_{\min}=\min(n_1,\ldots,n_k)$.
Provided that
\begin{equation}
\beta_l< \rho/2
\label{eq:randcond1}
\end{equation}
 for all $l=1,\ldots,k$
and
\begin{equation}
z_i\le  n_{\min}\theta_l\rho/2
\label{eq:randcond2}
\end{equation}
for all subordinates $i\in L_l$, $l=1,\ldots,k$, then
the unique solution
to \eqref{LP_basic} is given by $(\x^*,\t^*)$, where
$x^*_i=1$ if $i$ is an influencer else $x^*_i=0$,
and
$t^*_j=1$ if $j\in G_1\cup \cdots\cup G_k$ (i.e., $j$ is in an interest group)
while $t^*_j=0$
else (i.e., $j\in G_0$).
\label{thm:detnoise}
\end{theorem}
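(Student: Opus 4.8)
The plan is to follow the KKT/saddle-point route of Theorem~\ref{thm1}: exhibit an explicit dual certificate $(\blambda^*,\bmu^*,\bnu^*,\xi^*)$ certifying that $(\x^*,\t^*)$ solves \eqref{LP_basic}, and then use strict complementarity to force uniqueness. Feasibility of $(\x^*,\t^*)$ is immediate from {\bf A1'}: each $j\in G_l$ with $l\ge1$ is adjacent to the $l$th influencer, so $t^*_j=1\le(A^T\x^*)_j$; each $j\in G_0$ has $t^*_j=0\le(A^T\x^*)_j$; and $\e^T\x^*=k$ with $\bz\le\x^*,\t^*\le\e$.

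The dual construction is where I expect the real work to lie, since $\blambda^*$ must be scaled group by group. Inspecting \eqref{opt} at $(\x^*,\t^*)$ already forces several entries: for $j\in G_0$, the relations $\bmu^{*T}(\e-\t^*)=0$ and $\blambda^*+\bmu^*\ge\e$ give $\mu^*_j=0$ and $\lambda^*_j\ge1$, so I set $\lambda^*_j=1$ there; and $\blambda^{*T}(\t^*-A^T\x^*)=0$ forces $\lambda^*_j=0$ wherever $(A^T\x^*)_j>t^*_j$, so inside the interest groups I support $\blambda^*$ only on the clean sets $H_l$, where {\bf A2'} guarantees $(A^T\x^*)_j=1=t^*_j$. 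Writing $\theta_{\min}=\min_l\theta_l$, $\theta_{\max}=\max_l\theta_l$ and $M=\min_l\theta_ln_l$, I take $\lambda^*_j=M/(\theta_ln_l)$ for $j\in H_l$ (which is $\le1$, so $\mu^*_j=1-\lambda^*_j\ge0$), together with $\bnu^*=\bz$ and $\xi^*=M$. The $l$th influencer meets $\blambda^*$ only on $H_l$ (by {\bf A1'}--{\bf A2'} it is adjacent neither to $G_0$ nor to any $H_{l'}$ with $l'\ne l$), so $(A\blambda^*)_{\rm infl}=(M/(\theta_ln_l))|H_l|=M=\xi^*$, making the stationarity entry $(-A\blambda^*+\bnu^*+\xi^*\e)$ vanish at each influencer, as required when $x^*_i>0$.

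The decisive step is dual feasibility at the subordinates, and this is exactly where \eqref{eq:randcond1}--\eqref{eq:randcond2} enter. A subordinate $i\in L_l$ meets $\blambda^*$ only on $H_l$ (at most $\beta_l\theta_ln_l$ receivers, by {\bf A3'}) and on $G_0$ ($z_i$ receivers), so $(A\blambda^*)_i\le (M/(\theta_ln_l))\,\beta_l\theta_ln_l+z_i=M\beta_l+z_i$. Since $\beta_l<\rho/2\le\frac12$ we get $M\beta_l<M/2$, and since $z_i\le n_{\min}\theta_l\rho/2$ with $\theta_l\rho=\theta_l\theta_{\min}/\theta_{\max}\le\theta_{\min}$ we get $z_i\le\theta_{\min}n_{\min}/2\le M/2$; it is here that $\rho$ does the job of absorbing the group-dependent factor $\theta_l$ so that a single $\xi^*$ works across all groups. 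Hence $(A\blambda^*)_i<M=\xi^*$, so $\tau_i:=(-A\blambda^*+\bnu^*+\xi^*\e)_i=\xi^*-(A\blambda^*)_i>0$. With this strict inequality in hand, the remaining relations of \eqref{opt} hold by construction (all the others are equalities), certifying optimality of $(\x^*,\t^*)$.

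Uniqueness then follows as in Theorem~\ref{thm1}. Any optimal $(\x^\diamond,\t^\diamond)$ must satisfy complementary slackness against the certificate above; because $\tau_i>0$ \emph{strictly} at every subordinate, $\x^{\diamond T}(-A\blambda^*+\bnu^*+\xi^*\e)=0$ forces $x^\diamond_i=0$ at all subordinates, and then $\e^T\x^\diamond=k$ with $\bz\le\x^\diamond\le\e$ pins $\x^\diamond=\x^*$. Finally $\t^\diamond\le A^T\x^*$ with $\bz\le\t^\diamond\le\e$ forces $t^\diamond_j=0$ on $G_0$, while matching the optimal value $\sum_{l=1}^k n_l$ forces $t^\diamond_j=1$ on the interest groups, giving $\t^\diamond=\t^*$. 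The single point needing care throughout is verifying that the strict hypotheses \eqref{eq:randcond1}--\eqref{eq:randcond2} deliver $\tau_i>0$ rather than merely $\tau_i\ge0$, since that strictness is precisely what collapses the subordinate coordinates in the uniqueness argument.
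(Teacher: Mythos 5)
Your proof is correct and takes essentially the same route as the paper: feasibility of $(\x^*,\t^*)$, an explicit dual certificate for the KKT system \eqref{opt} with $\blambda$ supported on the sets $H_l$ and on $G_0$ and with $\bmu=\e-\blambda$, and then complementary slackness against that fixed certificate to force uniqueness of both $\x^\diamond$ and $\t^\diamond$.

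The only substantive difference is the normalization of the certificate: the paper takes $\lambda_j=n_{\min}/n_l$ on $H_l$, so the influencer values of $A\blambda$ (namely $\theta_l n_{\min}$) vary across groups, and it therefore places the strictness in $\bnu>0$ at the influencers by choosing $\xi=\omega$ strictly between all influencer and all subordinate values of $A\blambda$ (uniqueness then comes from $\bnu^T(\e-\x^\diamond)=0$, and $\t^\diamond=\t^*$ from complementarity with $\bmu$ and $\blambda$); you instead take $\lambda_j=M/(\theta_l n_l)$ with $M=\min_l\theta_l n_l$, which equalizes every influencer's value of $A\blambda$ to exactly $M$, permitting $\bnu^*=\bz$, $\xi^*=M$, with the strictness moved to the subordinates' reduced costs (uniqueness of $\x^\diamond$ from $\x^{\diamond T}(-A\blambda^*+\bnu^*+\xi^*\e)=0$, and $\t^\diamond=\t^*$ from the optimal-value argument) --- both certificates verify under \eqref{eq:randcond1}--\eqref{eq:randcond2} by the same computation, so this is a cosmetic rather than structural variation.
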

\begin{proof}
As above, the proof centers on constructing appropriate
KKT multipliers. We start with $\blambda$ and $\bmu$ defined
according to the following table.

\begin{center}
\begin{tabular}{llll}
           & $t^*_j$  & $\lambda_j$ & $\mu_j$ \\
\hline\\
$j\in H_l$   &   1    & $n_{\min}/n_l$ & $1-n_{\min}/n_l$ \\
$j\in G_l-H_l$ &  1    &   0         & 1 \\
$j\in G_0$     &  0    &  1          & 0 \\
\hline
\end{tabular}
\end{center}

We postpone defining $\bnu$ and $\xi$ until after we have verified
the first few KKT conditions.  Observe from the table that $(\t^*-A^T\x^*)_j=0$ for
$j\in G_0$ (both terms are 0) and also for $j\in H_l$ (both terms are 1),
so the KKT condition $\blambda^T(\t-A^T\x)=0$ is verified.
The conditions $\blambda+\bmu\ge\e$, $\t^T(\blambda+\bmu-\e)=0$, and
$\bmu^T(\e-\t)=0$, $\blambda\ge \bz$, $\bmu\ge\bz$  are all easily checked.

The remaining KKT conditions can be established by finding
$\bnu$
and $\xi$ so that
$(\x^*)^T(-A\blambda+\bnu+\xi\e)=0$ and
$-A\blambda+\bnu+\xi\e\ge\bz$.  Furthermore,
we require that $\bnu$ be positive in entries corresponding to influencers
and 0 in entries corresponding to subordinates.
In order for such $\bnu$ and $\xi$ to exist, it suffices to
establish that every entry of $A\blambda$ indexed by an influencer
is strictly greater than every entry of $A\blambda$ indexed by a subordinate.
If such a bound held, then there is a value, say $\omega$, such that
$A(i,:)\blambda>\omega$ for influencers $i$ while the opposite inequality
holds for subordinates $i$. Then we take $\nu_i=A(i,:)\blambda-\omega$
for $i$ an influencer, $\nu_i=0$ for $i$ a subordinate, and $\xi=\omega$
to satisfy the KKT conditions.

Observe that the value of $A(i,:)\blambda$ when $i$ is
the influencer for group $l$ is $n_{\min}/n_l\cdot|H_l|$ which
is bounded below by $\theta_ln_{\min}$.  On the other hand, when $i$ is
a subordinate in group $l$, then
$$A(i,:)\blambda \le \beta_l\theta_ln_{\min}+z_i.$$
(The first term arises from {\bf A3'}.)  Thus, to establish the KKT
conditions requires
for all $l$, all subordinates $i\in L_l$,
$$\beta_l +\frac{z_i}{n_{\min}\theta_l} <\frac{ \min_{l'}\theta_{l'}}{\theta_l}.$$
But this is established by the assumptions of the theorem, since the
two terms on the left-hand side are bounded above by $\rho/2$ (with the first bound being strict) while
the right-hand side upper bounds $\rho$.

Finally, uniqueness is established similarly as before: by complementarity, any
solution $(\x^\diamond,\t^\diamond)$ must satisfy $\bnu^T(\e-\x)=0$ for the
particular dual vector
$\bnu$ defined above. As in Theorem~\ref{thm1}, we must have $\x^\diamond = \x^*$. On the other hand,
$(\x^\diamond,\t^\diamond)$ has to satisfy $\bmu^T(\e - \t)=0$ and $\blambda^T(\t - A^T\x) = 0$
with the particular dual vectors $\bmu$ and $\blambda$ defined above. The first relation implies
that $t^\diamond_j = 1$ for all $j\in G_l - H_l$, $l=1,\ldots,k$. The second relation
implies that $t^\diamond_j = 0$ for all $j\in G_0$, and also forces $t^\diamond_j = 1$
at $j\in H_l$, $l=1,\ldots,k$.
Thus, we also have $\t^\diamond = \t^*$. This completes the proof.
\end{proof}

We now consider a randomized generative model to create a social
network in which the interest groups and influencers are `planted'
but then obscured by randomly generated noise.

Assume the sizes of the
interest groups $G_0,G_1,\ldots,G_k$ and $L_1,\ldots,L_k$ are chosen
determinimistically.  Let $q,s>0$ be two fixed parameters (independent of
problem size) of the generative model.

The arcs are chosen at random by the receivers
as follows.

\begin{enumerate}
\item
Each receiver in $G_1,\ldots,G_k$ creates an incoming arc from
its influencer with probability 1.
\item
Each receiver in $G_l$, $l=1,\ldots,k$, creates an incoming arc
with probability
$s r_{\min}/r_l$ from each subordinate in $L_l$.
Here $r_{\min}=\min(r_1,\ldots,r_k)$, and recall that
$r_l$ stands for the number of subordinates in $L_l$.

\item
With probability $q/r$, each receiver in $G_l$, $l=1,\ldots,k$,
creates an incoming arc from each sender outside its interest group.  Here,
$r=r_1+\cdots+r_k+k$, which is the total number of senders.

\item
A node in $G_0$ creates incoming arcs from each subordinates in all
groups $L_1,\ldots,L_k$ each
with probability $sr_{\min}/r$.
\end{enumerate}

One motivation for these formulas is that each receiver in $G_1,\ldots,G_k$ will have approximately
the same expected indegree, namely, approximately $1+sr_{\min}+q$, which in turn
is approximately $sr_{\min}$.
Thus, an algorithm could not distinguish
receiver interest groups with simple degree-counting.  The expected outdegree
of the influencer for group $l$ is $n_l+(n-n_l)q/r$,
and the
expected outdegree of a subordinate is $n_lsr_{\min}/r_l + |G_0|sr_{\min}/r +(n-n_l)q/r$.
Here, we set $n = n_1 + \cdots +n_k$.
This means that
an influencer can be distinguished from its own subordinates via degree counting,
but degrees alone cannot identify which $k$ senders are the influencers
(since a subordinate in $G_l$ could have higher degree than the influencer
in $G_{l'}$).  Finally, rule 4 implies that the expected indegree of nodes
in $G_0$ is roughly $sr_{\min}$, so again, they are not distinguished
by their degree.

The main theorem about this construction is that under certain
assumptions concerning the sizes of the groups,
$q$ and $s$, exact recovery of the optimal
solution is assured with high probability.

\begin{theorem}
Assume the graph is generated by rules $1$--$4$ enumerated above.
Assume
also that
\begin{equation}
s \le 0.3e^{-.4q},
\label{eq:randassump1}
\end{equation}
\begin{equation}
\ \ \ \ \ \ \ |G_0|\le 0.1n_{\min}re^{-1.3q}/(sr_{\min}),
\label{eq:randassump2}
\end{equation}
$$r\ge 6q,$$
and
$$r_l\le r/10 - 1,$$
for $l=1,\ldots,k$.

Then with probability exponentially close to $1$,
the conditions of Theorem~$\ref{thm:detnoise}$ hold, and hence the
influencers can be recovered as the solution to \eqref{LP_basic}.
By ``exponentially close to $1$'' we mean that
the probability of success is  $1-c_1\exp(-c_2n_{\min})$,
for scalars $c_1,c_2>0$ that may depend on $s$, $q$, $r_{\min}$ and $r$.
\end{theorem}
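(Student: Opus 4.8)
The plan is to show that rules 1--4 produce, with probability $1-c_1\exp(-c_2n_{\min})$, a network satisfying every hypothesis of Theorem~\ref{thm:detnoise}, whereupon that theorem gives the conclusion. Assumption {\bf A1'} is automatic: rules 1--4 never place an arc from an influencer into $G_0$, and rule 1 joins influencer $l$ to all of $G_l$. The real content is to exhibit the sets $H_l$ and verify {\bf A2'}, {\bf A3'}, \eqref{eq:randcond1} and \eqref{eq:randcond2} probabilistically. I would take $H_l$ to be the receivers of $G_l$ that draw \emph{no} out-of-group arc under rule 3; such a receiver is then adjacent only to group-$l$ senders, exactly as {\bf A2'} demands. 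Each receiver of $G_l$ lands in $H_l$ independently with probability $p_l:=(1-q/r)^{r-r_l-1}$ (there are $r-r_l-1$ senders outside group $l$), so $|H_l|$ is $\mathrm{Binomial}(n_l,p_l)$ and $\theta_l=|H_l|/n_l$ concentrates about $p_l$. For a subordinate $i\in L_l$, the number of its neighbours inside $H_l$ is, conditionally on $H_l$, $\mathrm{Binomial}(|H_l|,sr_{\min}/r_l)$ with mean fraction $sr_{\min}/r_l\le s$; this governs $\beta_l$. Finally $z_i$ is $\mathrm{Binomial}(|G_0|,sr_{\min}/r)$ with mean $|G_0|sr_{\min}/r$. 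Thus every quantity in Theorem~\ref{thm:detnoise} is a sum of independent indicators, and the proof splits into (a) checking the required inequalities hold comfortably \emph{in expectation}, and (b) a concentration argument.

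For step (a) the numerology of the hypotheses does the work. Using $\ln(1-x)\ge-x-x^2$ for $0\le x\le 1/6$ together with $r\ge 6q$ (so $q/r\le 1/6$ and $q^2/r\le q/6$), one gets $p_l=\mathbb{E}\theta_l\ge e^{-1.17q}$, and, since $r_l\le r/10-1$ bounds the spread of the exponents $r-r_l-1$ across groups by $r/10$, a lower bound $\rho\gtrsim e^{-0.12q}$ on $\min_l\theta_l/\max_l\theta_l$. Multiplying gives $\theta_l\rho\gtrsim e^{-1.29q}$, strictly larger than $e^{-1.3q}$, on the right-hand side of \eqref{eq:randcond2}. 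Against this, assumption \eqref{eq:randassump2} forces the mean $|G_0|sr_{\min}/r\le 0.1\,n_{\min}e^{-1.3q}$, leaving roughly a factor-$5$ gap below the target $n_{\min}\theta_l\rho/2$. Likewise \eqref{eq:randassump1} gives $\mathbb{E}\beta_l\le s\le 0.3e^{-0.4q}$, which sits below $\rho/2\gtrsim 0.5e^{-0.12q}$ with a margin that only widens in $q$; this is \eqref{eq:randcond1}. The specific constants $0.3,0.1,0.4,1.3,6,10$ exist precisely to create these constant-factor margins. One also notes $0<\theta_l\le 1$ (since $1\le|H_l|\le n_l$ with high probability) and $\beta_l\le s<1$, so {\bf A2'} and {\bf A3'} hold as well.

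For step (b), each of the $O(r)$ relevant variables is a binomial whose mean, or whose governing threshold, is $\Omega(n_{\min})$, because $p_l$, $sr_{\min}/r_l$, and the target $n_{\min}\theta_l\rho$ are bounded below by positive quantities depending only on the fixed parameters $s,q,r_{\min},r$. Chernoff's inequality then bounds, by $\exp(-c_2n_{\min})$, the probability that $|H_l|$, a subordinate's $H_l$-degree, or $z_i$ deviates from its mean by more than a fixed fraction of the available margin. For the upper tail of $z_i$ I would use the form $\Pr(X\ge t)\le(e\,\mathbb{E}X/t)^{t}$, which is exponentially small in $t=\Omega(n_{\min})$ even when $\mathbb{E}X$ is small (and when $|G_0|$ is below the threshold the bound $z_i\le|G_0|$ is already deterministic). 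A union bound over the $k$ groups and the at most $r$ subordinates contributes only a polynomial factor that is absorbed into $c_1$, yielding success probability $1-c_1\exp(-c_2n_{\min})$.

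The main obstacle I anticipate is the bookkeeping of the exponential constants in step (a): the right-hand side of \eqref{eq:randcond2} is the \emph{product} $\theta_l\rho$ of two correlated random quantities, and one must simultaneously lower-bound $\theta_l$ (small out-of-group exponent) and $\rho$ (small spread of exponents) while keeping the aggregate exponent strictly below the $1.3q$ built into \eqref{eq:randassump2}. Obtaining the clean inequality $1.29q<1.3q$ rather than a cruder bound relies on the second-order expansion $\ln(1-x)\ge-x-x^2$ and on $r\ge 6q$, and is where the margin is genuinely tight. A secondary nuisance is that $\beta_l$ is defined through a maximum over the subordinates of group $l$, so the concentration estimate for the $H_l$-degree must be proved for a fixed subordinate and then union-bounded over the subordinates before it is compared with $\rho/2$.
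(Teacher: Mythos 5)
Your proposal is correct and follows essentially the same route as the paper's proof: take $H_l$ to be the receivers of $G_l$ with no out-of-group arcs, check \eqref{eq:randcond1} and \eqref{eq:randcond2} in expectation using the numerical hypotheses, and finish with binomial concentration (Hoeffding/Chernoff) plus a union bound over the groups and subordinates, with the union bound absorbed into $c_1$ since $r$ and $k$ are fixed. The differences are only in constant bookkeeping (the paper bounds $\theta_l\in[0.9e^{-1.1q},1.1e^{-0.9q}]$ and $\rho\ge 0.8e^{-0.2q}$ rather than using the second-order logarithm expansion, and your ``factor-$5$'' margin in \eqref{eq:randcond2} is really about $3.6$), but the same constant-factor margins survive, so the argument is unaffected.
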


\begin{proof}
First, let us estimate how many receivers of $G_l$ will have no incoming arcs
from senders outside $G_l$ and we shall take the collection of all such receivers to be $H_l$. Moreover,
to be specific, we take $\theta_l$ so that $|H_l| = \theta_l n_l$ and set $\beta_l$ so that
a subordinate in group $l$
is adjacent to $\beta_l \theta_l n_l$ receivers of $H_l$, $l=1,\ldots,k$.

Now, note that a given receiver $j$ in $G_l$ has the probability
of  $(1-q/r)^{r-r_l-1}$ of having no out-of-group senders, which is bounded
below by $(1-q/r)^r$, which in turn is bounded below by $e^{-1.1q}$  provided
$q/r\le 1/6$ as assumed in the theorem.  On the other hand,
we have
$(1-q/r)^{r-r_l-1}\le (1-q/r)^{0.9r}\le e^{-0.9q}$
since $r_l\le 0.1r - 1$ by assumption.
Thus, the expected size of $H_l$ lies in the range $[n_le^{-1.1q},n_le^{-0.9q}]$.
The probability is thus exponentially small as $n_l$
gets large that $|H_l|$ will lie outside $[.9n_le^{-1.1q},1.1n_le^{-0.9q}]$.
Therefore, by the union bound, the probability is
exponentially small that any $|H_l|$, $l=1,\ldots,k$, will lie
outside this range. Hence, with probability exponentially
close to 1, $.9e^{-1.1q}\le \theta_l \le 1.1e^{-.9q}$ for all $l=1,\ldots,k$.
Furthermore, this means $\rho$ as defined in Theorem~\ref{thm:detnoise}
is at least $0.8e^{-0.2q}$.

Next, for each subordinate in group $l$, the probability that
a receiver in $G_l$ lies in $H_l$ and selects that particular subordinate is $sr_{\min}(1-q/r)^{r-r_l-1}/r_l$.
Thus, the expected number of receivers from $H_l$ that will select this subordinate
is $sr_{\min} (1-q/r)^{r-r_l-1}n_l/r_l$.
Hence, with probability
exponentially close to 1, the number of $H_l$ members
adjacent to this subordinate lies  in $[0.9sr_{\min}e^{-1.1q}n_l/r_l,1.1sr_{\min}e^{-.9q}n_l/r_l]$.
Thus, by the union bound, the probability is exponentially close to 1 that
all groups satisfy
\[
\beta_l\le \frac{1.1sr_{\min}e^{-.9q}}{\theta_lr_l} \le \frac{1.1sr_{\min}e^{.2q}}{.9r_l} < 1.3 se^{.2q}
< 0.4e^{-0.2q}\le \frac{\rho}{2}.
\]
Hence, we see that \eqref{eq:randcond1}
is satisfied.

Finally, we turn to the other condition of
Theorem~\ref{thm:detnoise}, i.e., \eqref{eq:randcond2}.
Observe that the expected number of
$G_0$-receivers that will select a particular subordinate is given by
$sr_{\min}|G_0|/r$. Thus, by Hoeffding's inequality,
the number of such receivers is bounded above by
\begin{equation}\label{thirdbd}
  \frac{n_{\min}}{4}(0.9 e^{-1.1q})(0.8 e^{-.2q}) + \frac{sr_{\min}|G_0|}{r} < \frac{n_{\min}}{2}(0.9 e^{-1.1q})(0.8 e^{-.2q})
\end{equation}
with probability at least
\begin{equation}\label{thirdbd2}
  1 - \exp\left(-\left[\frac{n_{\min}}{4|G_0|}(0.9 e^{-1.1q})(0.8 e^{-.2q})\right]^2|G_0|\right) \ge 1 - \exp(-c_3n_{\min})
\end{equation}
for some $c_3$ depending only on $s$, $q$ and $r_{min}/r$, where the inequalities in \eqref{thirdbd}
and \eqref{thirdbd2} follow from \eqref{eq:randassump2}. Moreover, using bounds on $\theta_l$ and $\rho$ from
the above discussions,
the right hand side of \eqref{thirdbd} is bounded above by $n_{\min}\theta_l\rho/2$ with
probability exponentially close to $1$.
Therefore, with probability exponentially close to
1, all subordinates will be adjacent to at most $n_{\min}\theta_l\rho/2$ in $|G_0|$, which establishes the theorem.
\end{proof}

\section{Probabilistic graphical model}
\label{sec:probmodel}
In this section, we consider the {\em independent cascade model}, which
was introduced by Goldenberg et al.\ \cite{Goldenberg}
and analyzed by Kempe et al.\ \cite{Kempe}.
Each arc $e\in E$ is now labeled with a
probability $p_e$.  At each time step, a node that received a message on the
previous step transmits it along an outgoing arc $e$ with probability $p_e$.
If the random choice is made not to transmit, then the sender does not
attempt to transmit again on subsequent steps.

Note that finding influential nodes in the independent cascade model
is not the same problem as finding influential nodes in a deterministic
network whose arcs have been selected probabilistically as in
the previous section.   The reason is that
in the independent cascade model, the
algorithm selecting the  most influential set of $k$ senders does not
have prior knowledge as to which transmissions will succeed or fail.

We focus again only on the bipartite graph case.  For this model, the
bipartite assumption apparently does entail a loss of generality, i.e.,
it is not clear how the general case can be reduced to the bipartite case.
On the other hand, the bipartite case still has some bearing on reality;
\cite{Bakshy} shows that the most common cascade depth on the Twitter social
network is 1.

Thus, assume $G=(V_1,V_2,E)$ is a bipartite graph.
Based on this fixed $G$, we consider a family of graphs $\Upsilon$ generated from
$G$ having the same vertex sets but with arcs
chosen from $E$ with independent probability $p_e$.

This model
can then be formulated as the following stochastic integer programming problem:
  \begin{equation}\label{sto_opt}
    \begin{array}{rl}
      \max\limits_{\x,\t} & E_{A\in \Upsilon}(\e^T\t)\\
      {\rm s.t.} & \t \le A^T\x,\\
                 & \bz\le \t\le \e, \bz\le \x\le \e,\\
                 & \e^T \x = k, x_i \in \{0,1\}, i = 1,\ldots,|V_1|,
    \end{array}
  \end{equation}
where the expectation is taken over the $|V_1|\times |V_2|$ incidence matrices $A$ of graphs in
$\Upsilon$. Notice that for any zero-one vector $\x$ satisfying $\e^T\x = k$, the
corresponding feasible random variables $t_j$, $j\le \sum_{l}n_l$, for \eqref{sto_opt}
that maximize the expectation satisfy
  \begin{equation*}
    t_j(A) = \begin{cases}
      1 & {\rm \ if\ } \exists i {\rm \ s.t.\ arc \ } (i,j) {\rm \ is \ chosen \ in\ } A,\\
      0 & {\rm \ otherwise.}
    \end{cases}
  \end{equation*}
  Furthermore, the probability that there is no arc $(i,j)$ for a given $j$ is
$\prod_{i:(i,j)\in E}(1-p_{(i,j)})^{x_i}.$
 Hence,
  \[
  E_{A\in \Upsilon}(t_j) = 1 - \prod_{i:(i,j)\in E}(1-p_{(i,j)})^{x_i},
  \]
  from which we see immediately that problem \eqref{sto_opt} is the same as
  \begin{equation*}
    \begin{array}{rl}
      \min\limits_\x & \sum_{j\in V_2} \prod_{i:(i,j)\in E}(1-p_{(i,j)})^{x_i} \\
      {\rm s.t.} & \e^T \x = k, \bz\le \x\le \e, x_i \in \{0,1\}, i = 1,\ldots,|V_1|.
    \end{array}
  \end{equation*}
  Dropping the integer constraints, we obtain the following
relaxation to \eqref{sto_opt}:
  \begin{equation}\label{relax}
    \begin{array}{rl}
      \min\limits_\x & g(\x):=\sum_{j\in V_2} \prod_{i:(i,j)\in E}(1-p_{(i,j)})^{x_i} \\
      {\rm s.t.} & \e^T \x = k, \bz\le \x\le \e.
    \end{array}
  \end{equation}
It can be checked that the objective function denoted $g(\x)$ is a convex
function of $\x$ for a fixed probability vector.
Thus, the above is a
convex optimization problem and can be solved in polynomial time.

We now begin the analysis of the possibility that the solution to the
stochastic integer program can be recovered from the convex relaxation.
Again, we assume a partitioning of both senders and receivers into $k$
interest groups with one influencer
per interest group.
We also assume that an influencer is adjacent to all receivers in its corresponding group,
and that there exists a collection of receivers $G_0$ with $|G_0|\ge 0$
that are not adjacent to any influencer. In other words, we assume {\bf A1'}.

First, it should be noted that even in the presence of strong assumptions
{\bf A1} to {\bf A3} made in the
deterministic case, the convex relaxation is not guaranteed to
find the influencers.
In
fact, even the integer solution may not
find the influencers.  For example, consider the case in which there
are two interest groups ($k=2$), and each has an influencer and one subordinate, thus
four senders total.  Assume the receiver group sizes are $n_1=100$ and
$n_2=20$.  Suppose that the number of receivers connected to the
two subordinates are $m_1=99$ and $m_2=10$ respectively.
Finally, suppose all the arc
probabilities are 0.5.  In this case, the optimal integer solution is to take
both the influencer
and subordinate in the first group rather than the two influencers.  This is because
the influencer in the large group will reach only about 50 of its receivers, so
its subordinate will reach another 25 or so in the first
group, which is better than the 10 or so that the influencer of the second group
might reach.

This example indicates that the influencers are in fact not the most influential
nodes unless the group sizes are not too disparate.

Now consider again a similar example in which the
sizes are $n_1=100$, $n_2= 44$, $m_1=80$, $m_2=40$.  In this case,
one can check that the optimal integer solution picks out the two
influencers and reaches an expected $0.5\cdot 144=72$ receivers.  However,
it is not hard to check that
there is a continuous solution of the
form $x_1=1$, $x_2=\epsilon$, $x_3=1-\epsilon$, $x_4=0$ for an $\epsilon>0$
better than this integer solution.

These small examples indicate that
two extensions to the
analysis from the last section should be made to handle the cascade model.
First, the smaller interest groups cannot be too much smaller than the larger
ones, else they will never be selected even by the
integer programming model.  Second, even when the
convex relaxation succeeds, it often gives weights to influencers that are close to
1 but not equal to 1.  In other words, a rounding procedure must be established
to obtain the integer solution from the convex solution.

To continue the development of the model, let us simplify notation
by assuming that all arcs have exactly the same probability $p\in(0,1)$.  (It is likely that our
results can be extended to the general case of distinct $p_e$ values,
but there is no obvious a priori
model for selecting values of $p_e$ that would be more realistic than equal
values.)
This means that the objective function may be rewritten as
$$g(\x)=\sum_{j\in V_2}(1-p)^{\h_j^T\x},$$
where $H$ is the $|V_1|\times |V_2|$ matrix whose $(i,j)$th entry is $1$ if
there is an arc in $G$ from the $i$th node in $V_1$ to
the $j$th node in $V_2$, else $H$ is zero, and where $\h_j$ denotes the
$j$th column of $H$.

We show in the next theorem that, under some assumptions and using a
suitable rounding procedure, it is possible to identify the influencers
from a solution of problem \eqref{relax}. Moreover, the indicator vector $\x^*$ corresponding to
the influencers actually solves \eqref{sto_opt}.

\begin{theorem}\label{thm6}
Suppose that for some $\xi\in \left[0,\frac{1}{2k+1}\right)$, we have
\begin{equation}\label{probmodelcond}
\min_{1\le i\le k} \hat n_i \ge (1-p)^{0.5 + \frac{\xi}{2}}\max_{1\le j\le k} \left\{\alpha_j + \frac{\gamma_j}{1-p}\right\}\ {\rm and}\
    n_l-\alpha_l > (1-p)^{-k}\gamma_l, \ \forall l=1,\ldots,k,
    \end{equation}
where $\hat n_l$ denotes the number of receivers in $G_l$ that
are not adjacent to senders outside $L_l$; each influencer in $L_l$ is adjacent
to all receivers in $G_l$;
each subordinate in $L_l$
is adjacent to at most $\alpha_l < n_l$ receivers
in $G_l$ and at most $\gamma_l$
receivers outside $G_l$.

Define a vector $\y_\xi(\x)$ as follows:
  \begin{equation*}
    (\y_\xi(\x))_i := \begin{cases}
      1& {\rm if}\ x_i \ge 0.5 - \frac{\xi}{2},\\
      0& {\rm otherwise}.
    \end{cases}
  \end{equation*}
    Then:
    \begin{enumerate}[{\rm (a)}]
      \item The vector $\y_\xi(\x^\diamond) = \x^*$ for any solution $\x^\diamond$ of problem \eqref{relax}.
      \item The vector $\x^*$ is a solution of \eqref{sto_opt}.
    \end{enumerate}
  \end{theorem}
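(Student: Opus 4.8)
The plan is to work throughout with the \emph{marginal coverage} of each sender. For $i\in V_1$ write $\phi_i(\x):=\sum_{j:(i,j)\in E}(1-p)^{\h_j^T\x}$, so that $\partial g/\partial x_i=\ln(1-p)\,\phi_i(\x)$ is a negative multiple of $\phi_i$. Since $g$ is convex and the feasible set of \eqref{relax} is a polytope, a feasible $\x^\diamond$ is optimal iff it admits no feasible descent direction; in particular, whenever $x_c^\diamond>0$ and $x_b^\diamond<1$, moving mass from $c$ to $b$ cannot decrease $g$, which gives $\phi_c(\x^\diamond)\ge\phi_b(\x^\diamond)$. Write $M:=\max_l\{\alpha_l+\gamma_l/(1-p)\}$ and $N:=\min_l\hat n_l$, so that the first hypothesis reads $N\ge(1-p)^{0.5+\xi/2}M$. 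First I would establish a group-wise domination inequality: for every feasible $\x$, every group $l$, and every subordinate $i'\in L_l$ with influencer $i_l^*$,
\[
\phi_{i_l^*}(\x)-\phi_{i'}(\x)\ \ge\ (n_l-\alpha_l)(1-p)^k-\gamma_l\ >\ 0 ,
\]
the last step being exactly the second hypothesis. This follows by cancelling the receivers of $G_l$ adjacent to both, bounding the $\ge n_l-\alpha_l$ receivers of $G_l$ missed by $i'$ below by $(1-p)^k$ (using $\h_j^T\x\le\e^T\x=k$), and the $\le\gamma_l$ out-of-group neighbours of $i'$ above by $1$. Taking $b=i_l^*$, $c=i'$ in the optimality remark yields the \emph{forcing} fact: at any optimal $\x^\diamond$, if a subordinate has positive weight then its own influencer has weight exactly $1$; equivalently, an influencer of weight $<1$ forces all subordinates in its group to weight $0$.

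For part (a) I would prove the two halves of $\y_\xi(\x^\diamond)=\x^*$ separately. If no subordinate is positive, the budget forces every influencer to weight $1$ and we are done, so assume some subordinate is positive. For the influencer bound, take an influencer $i$ (group $l$) with $0<x_i^\diamond<1$; by forcing its group carries no subordinate weight, so each of the $\hat n_l$ receivers of $H_l$ contributes exactly $(1-p)^{x_i^\diamond}$, giving $\phi_i\ge N(1-p)^{x_i^\diamond}$. A positive subordinate $i''$ satisfies $\phi_{i''}\le(1-p)M$ (each neighbour sits at exponent $\ge x_{i''}^\diamond\ge0$, an in-group neighbour additionally seeing the influencer at weight $1$); moving mass from $i''$ to $i$ gives $\phi_{i''}\ge\phi_i$, whence $N(1-p)^{x_i^\diamond}\le(1-p)M$, and the first hypothesis forces $x_i^\diamond\ge0.5-\xi/2$ (the case $x_i^\diamond=0$ is excluded the same way). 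For the subordinate bound, let $i'$ (group $l$) have weight $a>0$; since $i'$ contributes $a$ to the exponent of \emph{every} neighbour and its influencer is at weight $1$, one gets the clean estimate $\phi_{i'}\le(1-p)^{1+a}M$. The crucial step is a budget-averaging argument: as the weights sum to $k$ and $i'$ already uses $a$, the $k$ influencers share at most $k-a$, so some influencer $i_0$ has weight $x_0\le1-a/k<1$; by forcing its group is subordinate-free, so $\phi_{i_0}\ge N(1-p)^{x_0}\ge N(1-p)^{1-a/k}$, and moving mass from $i'$ to $i_0$ gives $\phi_{i'}\ge\phi_{i_0}$. Chaining these with the first hypothesis collapses (after dividing by $M$) to $(1-p)^{1+a}\ge(1-p)^{1.5+\xi/2-a/k}$, i.e.\ $a(k+1)/k\le0.5+\xi/2$; a short manipulation shows this bound is $<0.5-\xi/2$ precisely when $\xi<1/(2k+1)$, which is the standing hypothesis on $\xi$.

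For part (b) I would argue by an exchange (swapping) argument on integer points, which also shows $\x^*$ is the \emph{unique} integer optimum. Given any $T\ne T^*$ with $|T|=k$, pick a subordinate $i'\in T$ of group $l$. If its influencer $i_l^*\notin T$, replacing $i'$ by $i_l^*$ lowers $g$ by at least $p[(n_l-\alpha_l)(1-p)^k-\gamma_l]>0$ (second hypothesis). If instead $i_l^*\in T$, then group $l$ holds two members of $T$, so by pigeonhole some group $l'$ is missed entirely; replacing $i'$ by $i_{l'}^*$ lowers $g$ by at least $p[\,N-(1-p)M\,]>0$ (first hypothesis: the $\hat n_{l'}$ receivers of $H_{l'}$ jump from exponent $0$ to $1$, a gain $\ge Np$, while the loss from deleting $i'$ is at most $p(1-p)M$). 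Each swap strictly decreases $g$ and increases the number of influencers in $T$, so after at most $k$ swaps we reach $T^*$; hence $g(\x^*)<g(\x_T)$ for every $T\ne T^*$, i.e.\ $\x^*$ solves \eqref{sto_opt}.

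The main obstacle is the subordinate bound in (a). The naive estimate $\phi_{i_0}\ge N(1-p)$ (using only $x_0<1$) is too weak and would permit $a$ as large as roughly $0.5+\xi/2$; two observations are needed to sharpen it to $0.5-\xi/2$: that a subordinate taxes the exponent of all of its neighbours at once (yielding the multiplicative $(1-p)^{1+a}M$ rather than an additive bound), and the budget-averaging step producing an interior influencer of weight at most $1-a/k$. Lining up the two exponents is exactly what pins the admissible range of $\xi$ to $[0,1/(2k+1))$, so I expect the bookkeeping of these exponents, rather than any single inequality, to be the delicate part.
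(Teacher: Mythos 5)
Your proof is correct --- I checked the arithmetic at every step, including the key collapse $a(k+1)/k\le 0.5+\xi/2\Rightarrow a<0.5-\xi/2$ exactly when $\xi(2k+1)<1$ --- and it shares the paper's skeleton while departing from it in one genuine way. The shared part: your pairwise-exchange criterion ($x_c^\diamond>0$, $x_b^\diamond<1$ forces $\phi_c\ge\phi_b$) is the feasible-direction form of the paper's KKT system \eqref{nonlinopt}; your ``group-wise domination inequality'' is precisely the paper's claim $(\nabla g(\x^\diamond))_{i_0}<(\nabla g(\x^\diamond))_{j_0}$, proved the same way (cancel common neighbours, lower-bound the $\ge n_l-\alpha_l$ missed receivers by $(1-p)^k$, upper-bound the $\le\gamma_l$ noise neighbours by $1$, invoke the second half of \eqref{probmodelcond}); and your part (b) is the paper's swap argument with the same two cases and the same estimates, differing only in that you iterate the swaps to get uniqueness explicitly. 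The genuine difference is in how part (a)'s threshold bounds are obtained. The paper splits on whether \emph{every} influencer has weight $\ge 1-\xi$: the easy case consumes the hypothesis $\xi<1/(2k+1)$ via $k\xi<0.5-\xi/2$, and the hard case anchors at the extremal pair $a=\min\{x_i^\diamond:\mbox{influencer},\,x_i^\diamond<1-\xi\}$, $b=\max\{x_j^\diamond:\mbox{subordinate},\,x_j^\diamond>0\}$, chains both through the multiplier $\lambda$ to get $b\le a-0.5+\xi/2$, and finishes with $b\ge 0$ and $a<1-\xi$. You instead split on whether \emph{any} subordinate is positive, get the influencer lower bound by comparing against an arbitrary positive subordinate, and get the subordinate upper bound by a budget-averaging (pigeonhole) step --- some influencer has weight $\le 1-a/k$ --- which is where you consume $\xi<1/(2k+1)$. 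Your route buys two things: the forcing lemma in its natural, stronger form (influencer weight $<1$, not just $<1-\xi$, zeroes its subordinates), and a transparent explanation of where the threshold $1/(2k+1)$ comes from; the paper's route avoids the pigeonhole and produces an explicit dual certificate ($\lambda$, $\u$, $\v$), consistent with the KKT-based proofs elsewhere in the paper. One shared blemish: your division by $M$ (like the paper's division by $\alpha_{l_b}+(1-p)^{-1}\gamma_{l_b}$) tacitly assumes $M>0$, but in the only case where you divide --- some subordinate has positive weight --- your own domination inequality rules out $M=0$, so nothing is lost.
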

  \begin{remark}
    When there are no noise arcs, i.e., $\gamma_l =0$ for all $l$,
    taking $\xi = 0$ and noting that $\hat n_l = n_l$ in this case,
    the condition~\eqref{probmodelcond} reduces to
    $\min_i n_i \ge (1-p)^{0.5}\max_j \alpha_j$.  This is the probabilistic
    noiseless case, that is, the analog to the deterministic noiseless case given
    by {\bf A1}--{\bf A3} in the previous section.  If $p\rightarrow 1$ (i.e.,
    the deterministic limit is approached), then
    the restriction $\min_i n_i \ge (1-p)^{0.5}\max_j \alpha_j$ becomes arbitrarily
    loose.
  \end{remark}
  \begin{proof}
    Let $\x^\diamond$ be a solution of \eqref{relax}. We first analyze the case when
$x^\diamond_i \ge 1 - \xi > \frac{2k}{2k+1}$ at every influencer $i$.
    In this case, it holds that for any $x_j^\diamond$ with $j$ being a subordinate,
    \[
    x_j^\diamond \le k - \sum_{i:{\rm influencer}}x_i^\diamond < k - \frac{2k^2}{2k+1} = \frac{k}{2k+1} < \frac{1}{2} - \frac{\xi}{2},
    \]
    from which we see immediately that $\y_\xi(\x^\diamond) = \x^*$.

    Hence, to establish {\rm (a)}, it remains to analyze the case when there exists a sender group $L_{l_0}$ such that $x^\diamond_{i_0} <  1-\xi$ at the influencer $i_0\in L_{l_0}$.

    In this case, first, we claim that $x^\diamond_j = 0$ for all $j\in L_{l_0}$, $j\neq i_0$.

    Suppose
    to the contrary that $x^\diamond_{j_0} > 0$ for some such $j = j_0$. We shall
    establish that $(\nabla g(\x^\diamond))_{i_0} < (\nabla g(\x^\diamond))_{j_0}$. Granting this,
    one can readily show that the vector $x^\dagger_\epsilon$
    \[
   (x^\dagger_\epsilon)_i = \begin{cases}
      x^\diamond_{i_0} + \epsilon x^\diamond_{j_0} & {\rm if}\ i = i_0,\\
      (1-\epsilon)x^\diamond_{j_0} & {\rm if}\ i = j_0,\\
      x^\diamond_{i} & {\rm otherwise}.
    \end{cases}
    \]
    is feasible for \eqref{relax} and $g(\x^\dagger_\epsilon)< g(\x^\diamond)$ for all sufficiently small $\epsilon>0$, contradicting the optimality of $x^\diamond$.
    Hence, to establish the claim, it now remains to show that $(\nabla g(\x^\diamond))_{i_0} < (\nabla g(\x^\diamond))_{j_0}$.

    To this end, define the sets $V_{i_0} = \{j:\; (i_0,j)\in E\}$ and $V_{j_0} = \{j:\; (j_0,j)\in E\}$. Suppose first that $\gamma_{l_0} = 0$.
    Since $n_{l_0} > \alpha_{l_0}$ by assumption, it follows that $V_{j_0}\subsetneq V_{i_0}$ and hence
    we see immediately that
    \[
     (\nabla g(\x^\diamond))_{i_0} = \ln(1-p) \sum_{j\in V_{i_0}}(1-p)^{\h_j^T\x^\diamond} < \ln(1-p) \sum_{j\in V_{j_0}}(1-p)^{\h_j^T\x^\diamond} = (\nabla g(\x^\diamond))_{j_0}.
    \]
    We next consider the case when $\gamma_{l_0} > 0$:
    \begin{equation*}
    \begin{split}
      (\nabla g(\x^\diamond))_{i_0} &= \ln(1-p) \sum_{j\in V_{i_0}\cap V_{j_0}}(1-p)^{\h_j^T\x^\diamond} + \ln(1-p) \sum_{j\in V_{i_0}\backslash V_{j_0}}(1-p)^{\h_j^T\x^\diamond}\\
      &\le \ln(1-p) \sum_{j\in V_{i_0}\cap V_{j_0}}(1-p)^{\h_j^T\x^\diamond} + \ln(1-p) (n_{l_0}-\alpha_{l_0})
(1-p)^k\\
      &< \ln(1-p) \sum_{j\in V_{i_0}\cap V_{j_0}}(1-p)^{\h_j^T\x^\diamond} + \ln(1-p) \gamma_{l_0}\\
      &\le \ln(1-p) \sum_{j\in V_{i_0}\cap V_{j_0}}(1-p)^{\h_j^T\x^\diamond} + \ln(1-p) \sum_{j\in V_{j_0}\backslash V_{i_0}}(1-p)^{\h_j^T\x^\diamond}\\
      &=  (\nabla g(\x^\diamond))_{j_0},
    \end{split}
    \end{equation*}
    where the first inequality follows from the fact that there are at least
$n_{l_0}-\alpha_{l_0}$ nodes in $V_{i_0}\backslash V_{j_0}$, and $\h_j^T\x^{\diamond}\le k$ by feasibility
(since $\e^T\x^{\diamond}=k$ and $\x^{\diamond}\ge \bz$).
    The second inequality follows from the assumption of the theorem, while the third inequality follows from the definition of $\gamma_{l_0}$
    and the fact that $\h_j^T\x^\diamond\ge 0$. Combining the two cases, we conclude that $(\nabla g(\x^\diamond))_{i_0} < (\nabla g(\x^\diamond))_{j_0}$ and hence
    we have shown that $x^\diamond_j = 0$ for all $j\in L_{l_0}$, $j\neq i_0$.

    Using this claim, the fact that $x^\diamond_{i_0}<1 - \xi$, $0\le \x^\diamond \le 1$ and $\e^T\x^\diamond = k$,
    we conclude that there must exist a group $L_{l_1}$ such that $x^\diamond_{i_1}\ge 1 - \xi$ at
    the influencer $i_1\in L_{l_1}$ with $x^\diamond_{j_1} > 0$ for some subordinate $j_1\in L_{l_1}$. Define
    \begin{equation*}
    \begin{array}{ll}
      a := \min\{x^\diamond_i:\; \mbox{$i$ is an influencer}, x^\diamond_i<1 - \xi\},&i_a\in \argmin\{x^\diamond_i:\; \mbox{$i$ is an influencer}, x^\diamond_i<1-\xi\},\\
      b := \max\{x^\diamond_j:\; \mbox{$j$ is a subordinate}, x^\diamond_j>0\},&j_b\in \argmax\{x^\diamond_j:\; \mbox{$j$ is a
      subordinate}, x^\diamond_j>0\}.
    \end{array}
    \end{equation*}
    From the above discussion, these quantities are well-defined. For easy reference, we name the group containing $i_a$ by $L_{l_a}$
    and the group containing $j_b$ by $L_{l_b}$. We also denote the influencer in $L_{l_b}$ by $i_b$.

    To establish that $\y_\xi(\x^\diamond) = \x^*$,
we now show that  $b<0.5 - \frac{\xi}{2}\le a$.

    To this end, recall that
    the point $\x^\diamond$ is optimal if and only if there exist $\u\ge \bz$, $\v\ge \bz$ and $\lambda\in \R$  such
    that the following KKT conditions are satisfied:
    \begin{align}\label{nonlinopt}
    \begin{array}{rrr}
       \nabla g(\x^\diamond)+ \lambda \e - \u + \v  = \bz,\\
       \u^T\x^\diamond  = 0,\ \ \v^T(\x^\diamond - \e)  = 0.
    \end{array}
    \end{align}
    Since $x^\diamond_{i_a} = a <1 - \xi \le 1$, we have $v_{i_a} = 0$. Moreover, since there are $\hat n_{l_a}$
    receivers adjacent to no vertex outside $L_{l_a}$ and recall we have shown that $x_i^\diamond = 0$
    for $i\in L_{l_a}$, $i\neq i_a$, it follows that $\h_j^T\x^\diamond = a$ at all such receivers.
    Combining these two observations, we obtain that
    \begin{equation}\label{noisybd1}
      \ln(1-p)\cdot \hat n_{l_a}(1-p)^a \ge (\nabla g(\x^\diamond))_{i_a} \ge -\lambda.
    \end{equation}
    Next, notice that at $j_b$, we have $x^\diamond_{j_b} = b >0$ and thus
    $u_{j_b}=0$. Hence
    \begin{align*}
      -\lambda &\ge (\nabla g(\x^\diamond))_{j_b} = \left(\ln(1-p)H\begin{pmatrix}
        (1-p)^{\h_j^T\x^\diamond}
      \end{pmatrix}_{j\le \sum_{l}n_l}\right)_{j_b}\\
      & = \ln(1-p)\sum_{j: (j_b,j)\in E} (1-p)^{\h_j^T\x^\diamond}\\
      & \ge \ln(1-p)\cdot (\alpha_{l_b}(1-p)^{1+b} + \gamma_{l_b}(1-p)^b),
    \end{align*}
    where the inequality follows since:
    at any $j\in G_{l_b}$ with $(j_b,j) \in E$, $\h_j$ is $1$ at the $i_b$th and $j_b$th entry,
    and hence $\h_j^T\x^\diamond \ge \x^\diamond_{i_b} + \x^\diamond_{j_b}\ge 1+b$;
    while for those $j\notin G_{l_b}$ with $(j_b,j) \in E$, we have $\h_j^T\x^\diamond \ge x^\diamond_{j_b}\ge b$. Combining this with \eqref{noisybd1}, we obtain further that
    \begin{align*}
    \begin{array}{rrl}
      &\ln(1-p)\cdot \hat n_{l_a}(1-p)^a & \!\!\!\!\ge  \ln(1-p)\cdot (\alpha_{l_b}(1-p)^{1+b} + \gamma_{l_b}(1-p)^b),\\
     \Rightarrow& \alpha_{l_b}(1-p)^{1+b} + \gamma_{l_b}(1-p)^b & \!\!\!\!\ge \hat n_{l_a}(1-p)^a,\\
      \Rightarrow&(1-p)^{b}& \!\!\!\!\ge\frac{\hat n_{l_a}}{\alpha_{l_b} + (1-p)^{-1}\gamma_{l_b}}(1-p)^{a-1}\ge (1-p)^{a - 0.5 + \frac{\xi}{2}},
    \end{array}
    \end{align*}
    where the last inequality comes from the assumption. This implies that $b \le a - 0.5 + \frac{\xi}{2}$, which
    together with $b\ge 0$ and $a<1 - \xi$ gives what we want. This proves part {\rm (a)}.

    We now prove part {\rm (b)}.

    Take a feasible point $\x\neq \x^*$ of \eqref{sto_opt}. Necessarily, there are exactly $k$ entries of $\x$ equal to $1$.
    We will establish {\rm (b)} by constructing a feasible vector $\x'$ from $\x$ such that $g(\x')<g(\x)$.

  Consider first
  the case that there is a group $L_{l_0}$ with at least one
subordinate $j_0$ such
that $x_{j_0}=1$, while for the influencer of the group $i_0$, $x_{i_0}=0$.
In this case, define
  a feasible vector $\x'$ by
  \begin{equation*}
    x'_i = \begin{cases}
      1 & {\rm if \ }i = i_0,\\
      0 & {\rm if \ }i = j_0,\\
      x_i & {\rm else},
    \end{cases}
  \end{equation*}
  and note that
  \[
  g(\x) = \sum_{j\in G_{l_0}}(1-p)^{\h_j^T\x} + \sum_{l\neq l_0}\sum_{j\in G_{l}}(1-p)^{\h_j^T\x}.
  \]
  We shall analyze the change in the value of $g$ by looking at contributions from within $G_{l_0}$ and outside $G_{l_0}$.

  By changing from $\x$ to $\x'$, there are now at least $n_{l_0}-\alpha_{l_0}$ receivers within group $G_{l_0}$ adjacent to one
  more sender (the influencer $i_0$). Since in solution
  $\x$ these receivers were adjacent to at most $k-1$ senders, this means that
  the objective function contribution from $G_{l_0}$ goes down by at least
  $(n_{l_0}-\alpha_{l_0})((1-p)^{k-1}-(1-p)^k) = (n_{l_0}-\alpha_{l_0})p(1-p)^{k-1}$.
  On the other hand, the objective function may increase due to contributions in other groups;
  in particular, the subordinate $j_0$ may
  be adjacent to at most $\gamma_{l_0}$ receivers in other groups, and therefore the increase
  in the objective function is at most $\gamma_{l_0}((1-p)^0-(1-p)^1) = p\gamma_{l_0}$. Thus, to confirm that $g(\x') < g(\x)$ requires showing
  that $(n_{l_0}-\alpha_{l_0})p(1-p)^{k-1}>p\gamma_{l_0}$; this follows from the second
  half of \eqref{probmodelcond}.

The preceding
argument shows that a solution to \eqref{sto_opt} cannot be optimal if a
subordinate in an interest group is selected while the influencer is not.
In particular, this means that if a solution $\x$ is optimal and it
has exactly one `1' entry per influence group, then it must be equal to
$\x^*$.

Consider now the case that
  $\x$ has two (or more) `1' entries in the same group $L_{l_1}$.  By
the preceding analysis,
we already know that $\x$ is suboptimal if the influencer
in $L_{l_1}$ is not selected.  Therefore, assume that $x_{i_1}=1$, where
$i_1$ is the influencer of $L_1$, and assume also that there is a subordinate
$j_1\in L_{l_1}$ such that $x_{j_1}=1$.

By feasibility, there is
  another group $L_{l_0}$ in which $\x$ has no `1' entry at all. Consider the solution
  $\x'$ in which the subordinate in group $L_{l_1}$ indexed $j_1$ is changed to 0,
  while the influencer in group $L_{l_0}$, say which is numbered $i_0$, is changed to 1.

  Unlike the previous case, we shall analyze the change in the value of $g$ by looking at the decrease of function value
  due to the change from $x_{i_0}=0$ to $x_{i_0}'=1$, and then the increase induced by changing $x_{j_1}=1$ to $x_{j_1}'=0$.

  Since there are $\hat n_{l_0}$ receivers in $G_{l_0}$ not adjacent to any sender whose $\x$-value is 1,
  the decrease in the objective function due to changing $x_{i_0}=0$ into $x_{i_0}'=1$
  is at least $\hat n_{l_0}((1-p)^0-(1-p)^1) = \hat n_{l_0}p$.  On the other hand, the increase
  in the objective function due to changing $x_{j_1}=1$ to $x_{j_1}'=0$ is at most
  \[
  \alpha_{l_1}((1-p)^1-(1-p)^2)+\gamma_{l_1}((1-p)^0-(1-p)^1) = \alpha_{l_1}p(1-p)+\gamma_{l_1}p,
  \]
  where: the first term accounts for
  the maximum possible increase in the objective function among receivers in
  $G_{l_1}$ (these receivers are adjacent to at least two
  senders in $L_{l_1}$ in solution $\x$, namely the $i_1$ and $j_1$), while the second term is the maximum possible increase contributed by other groups (not $G_{l_1}$)
  due to changing $x_{j_1}=1$ to $x_{j_1}'=0$. Thus, showing that the objective function decreases
  requires establishing  $\hat n_{l_0}p>\alpha_{l_1}p(1-p)+\gamma_{l_1}p$.  This follows
  from the first condition of \eqref{probmodelcond} since $1-p < (1-p)^{0.5+\xi/2}$. This completes the proof.
  \end{proof}

  It is now possible, as in the previous section, to write down rules for
a generative model whose networks will satisfy the conditions of
Theorem~\ref{thm6}.  Since the construction and proof are not
very different from those in the previous section, we will omit the
details but instead point out the salient differences imposed
by \eqref{probmodelcond}.  The first part of condition \eqref{probmodelcond}
requires all the receiver groups $G_1,\ldots,G_k$ to have roughly the
same size.  The second part of the condition places
a stringent bound on the number of noise arcs if $k$ is
large. We conjecture that a different analysis could improve this
exponential dependence on $k$.

\section{On solving the general case with the convex relaxation}
The theory developed shows that the two convex relaxations
can exactly solve the underlying integer problem when the data comes
from the postulated generative models.  Unless $P=NP$, we cannot
expect our convex relaxation (or any convex relaxation) to solve
these problems if the data comes from an unknown source.  It is reasonable,
however, to at least expect that when the relaxations succeed in
exact recovery for general data, there is a certificate of their
success.

We first make the fairly obvious but still useful observation that
if the LP model \eqref{LP_basic} returns a 0-1 solution as the
LP optimizer, then this
solution must be optimal also for the integer program, and furthermore,
optimality for the integer program is certified by the LP solution.
This observation holds regardless of the source of the data.
We use this fact in the next section.

In the case of the convex relaxation \eqref{relax} for the cascade
model, the situation is not as clear. Our theory states that
even when \eqref{relax} is able to identify the optimizer of
\eqref{sto_opt}, it does not
return a 0-1 solution
and hence is not able to certify optimality.  For general problems,
the proposed rounding
procedure may not even yield a feasible point.
Thus, in the case that the problem data comes from an unknown source,
it is unclear how the convex solution could be useful.

We now describe a simple strategy for making use of the convex solution
of \eqref{relax}.
Consider
  \begin{equation*}
    \tilde{\y}(\x) := \begin{cases}
      1 & \mbox{if $x_i$ is one of the $k$ largest entries in $\x$},\\
      0 & \mbox{else}.
    \end{cases}
  \end{equation*}
  Notice that this vector is well-defined whenever the $k$ largest entries in $\x$ are uniquely identified.
  In addition, $\tilde{\y}(\x^\diamond) = \y_\xi(\x^\diamond)$
  under the assumptions of Theorem~\ref{thm6}. Furthermore, it is not hard to show that $\x \mapsto \tilde{\y}(\x)$
  sends an $\x$ feasible for \eqref{relax} to the closest vertex of the feasible region. Given a solution $\x'$ from the
  convex relaxation \eqref{relax}, let $\x''$ be a solution of the following convex optimization problem
  \begin{equation*}
    \begin{array}{rl}
      \min\limits_\x & g(\x)\\
      {\rm s.t.} & \e^T \x = k, \\
                 & \tilde{\y}(\x')^T\x \le k - 1,  \\
                 &\bz\le \x\le \e.
    \end{array}
  \end{equation*}
  It is not hard to see that the constraint $\tilde{\y}(\x')^T\x \le k - 1$ cuts off one and only one vertex from the feasible region of \eqref{relax},
  namely $\tilde{\y}(\x')$. Thus, it follows immediately that if $g(\x'') > g(\tilde{\y}(\x'))$, then one can certify that $\tilde{\y}(\x')$ is
  an optimal solution for \eqref{sto_opt}.  More complicated variants of this
strategy exist that require greater computational time but are able to
certify optimality of $\tilde{\y}(\x')$ in more cases.  We have confirmed that the
simple strategy described in this section is able to certify
optimality to \eqref{sto_opt} for some instances in which $\x'$ is already
fairly close to a 0-1 point.

\section{A simplified forest fire model with numerical simulations}
\label{sec:forestfire}

  In this section, via numerical simulations, we study the performance
  of \eqref{LP_basic} and \eqref{relax} on recovering the influencers
  in random graphs generated according to a simplified forest fire
  model.  Our codes are written in MATLAB. All numerical experiments
  are performed on MATLAB 7.14 (R2012a) equipped with CVX version 1.22 \cite{GrantBoyd}
  and SeDuMi 1.21 \cite{sedumi}.

  We generate random graphs as follows. We start with $k$ influencers,
  each paired up with one receiver, and set upper bounds $u_i$ and
  $u_f$ for the total number of senders and
  receivers, respectively. When the upper bounds $u_i$ and $u_f$ are
  not reached, we add a receiver with probability $p_1$, and a
  subordinate with probability $1 - p_1$. The new receiver $j$ first
  picks randomly an existing receiver and chooses one of its
  senders $i_1$ as its own at random, i.e., an arc
  $(i_1,j)$ is added to the graph.  Then, with probability
  $p_2$, this new receiver $j$ continues by picking a random receiver
  $j_1$ of $i_1$, and chooses at random one of its senders $i_2$
  as its own. This process continues with probability $p_2$. The
  procedure for adding a new subordinate is similar. When one of the
  upper bounds $u_i$ and $u_f$ is reached, say $u_i$ is reached, a new
  receiver is then added according to the above procedure until $u_f$
  is also reached. This generative process is a two-layer version
  of the {\em forest-fire} model due to
\cite{Leskovec}.  Networks
  generated by the forest-fire model have graph properties that seem
  to match those of real social networks, although a detailed analysis
  of these properties is lacking.  Notice the ``rich-get-richer'' flavor
  of the forest-fire model, i.e., that nodes with many connections attract
  even more connections compared to isolated nodes; this also appears
  to be a characteristic of real social networks.

  We have further tweaked the model as follows.
  In order to guarantee that the $k$
  influencers remain most influential in the resulting graph, each
  receiver will randomly pick one influencer and add the
  corresponding arc if it is not
  already adjacent to one. Finally, a fixed percentage $\sigma\%$ of arcs
  randomly chosen from the complement graph are added as noise arcs.

  In our first test below, we consider model \eqref{LP_basic}. We
  choose $k$ between $20$ and $120$, $u_i = 10k$ and $u_f = 10u_i$. We
  consider $p_1 = 0.3$, $0.7$, $p_2 = 0.9$ and $\sigma = 0.5$, $1$.  We
  generate $10$ random instances as above using these parameters. The
  computational results, averaged over the $10$ instances, are
  reported in Table~\ref{t1}, where we report the number of arcs
  before noise is added ({\bf E$_{\rm orig}$}) and the number of noise
  arcs ({\bf E$_{\rm noise}$}). We also report the recovery error
  ({\bf err}) given by $\sqrt{\sum_{i=1}^k |x_i - 1|^2}$,\footnote{Note that by construction,
  the influencers are located at the first $k$ entries.} where $\x$ is
  the approximate solution of \eqref{LP_basic} obtained via CVX
  (calling SeDuMi 1.21), and the number of instances with successful
  recovery (${\bf N}_{\rm rec}$) marked by $\sqrt{\sum_{i=1}^k |x_i -
    1|^2} < 10^{-8}$.  The results show that even with a relatively
  large number of noise arcs, model \eqref{LP_basic} still
  successfully identify the influencers.

  In our second test, we consider model \eqref{relax}. We take $p_1 = 0.3$, $0.7$ and $p_2 = 0.9$ as before
  but consider the much smaller noise $\sigma = 0$ and $0.01$.
  Moreover, since \eqref{relax} is solved by CVX (calling SeDuMi 1.21) via a successive approximation method that becomes very costly for large instances, we
  only consider values of $k$ between $20$ and $45$.
  This is because \eqref{relax} involves a transcendental convex function
  and therefore is not expressible as a semidefinite programming problem;
  it is only approximately expressible \cite{GrantBoyd}.
  We then set $u_i = 10k$ and $u_f = 10u_i$ as before and take $p = 0.9$ in \eqref{relax}.
  We generate $10$ random instances using these parameters. The computational results averaged over the $10$ instances
  are reported in Table~\ref{t2}, where {\bf E$_{\rm orig}$} and {\bf E$_{\rm noise}$} are defined as above. The recovery error
  {\bf err} is given by $\sqrt{\sum_{i=1}^k |\tilde x_i - 1|^2}$, where
  $\tilde{\x}$ is the zero-one vector that is one at those entries corresponding to the largest $k$ elements in the solution vector returned from CVX.
  Furthermore, successful recovery is marked by $\sqrt{\sum_{i=1}^k |\tilde x_i - 1|^2} < 10^{-8}$, and the number
  of such instances is reported under ${\bf N}_{\rm rec}$.
  The computational results show that model \eqref{relax} is not capable of identifying all influencers correctly, even when there are no noise arcs.

  \begin{table}[h!]
\caption{\small Results on model \eqref{LP_basic} applied to simplified forest fire model.}
 \label{t1}\centering
\begin{tabular}{|c c||r r r r|r r r r|}
\hline
& & \multicolumn{4}{c|}{$\sigma = 0.5$}
& \multicolumn{4}{c|}{$\sigma = 1$}\\
$k$ & $p_1$ & \multicolumn{1}{c}{\bf E$_{\rm orig}$} &
\multicolumn{1}{c}{\bf E$_{\rm noise}$} & \multicolumn{1}{c}{\bf err}& \multicolumn{1}{c|}{${\bf N}_{\rm rec}$}
& \multicolumn{1}{c}{\bf E$_{\rm orig}$} &
\multicolumn{1}{c}{\bf E$_{\rm noise}$} & \multicolumn{1}{c}{\bf err}& \multicolumn{1}{c|}{${\bf N}_{\rm rec}$}\\

\hline

 20  & 0.3 &   9338 &   1953 & 0.0e+0 & 10/10   &   9367 &   3906 & 0.0e+0 & 10/10\\
 20  & 0.7 &   8674 &   1957 & 0.0e+0 & 10/10   &   8467 &   3915 & 0.0e+0 & 10/10\\
 40  & 0.3 &  18636 &   7907 & 0.0e+0 & 10/10   &  18494 &  15815 & 0.0e+0 & 10/10\\
 40  & 0.7 &  16704 &   7916 & 0.0e+0 & 10/10   &  17657 &  15823 & 0.0e+0 & 10/10\\
 60  & 0.3 &  27358 &  17863 & 0.0e+0 & 10/10   &  26842 &  35732 & 0.0e+0 & 10/10\\
 60  & 0.7 &  25548 &  17872 & 0.0e+0 & 10/10   &  26791 &  35732 & 0.0e+0 & 10/10\\
 80  & 0.3 &  35618 &  31822 & 0.0e+0 & 10/10   &  36987 &  63630 & 0.0e+0 & 10/10\\
 80  & 0.7 &  33603 &  31832 & 0.0e+0 & 10/10   &  33338 &  63667 & 0.0e+0 & 10/10\\
 100 & 0.3 &  44394 &  49778 & 0.0e+0 & 10/10   &  43628 &  99564 & 5.1e+0 &  0/10\\
 100 & 0.7 &  41670 &  49792 & 0.0e+0 & 10/10   &  43082 &  99569 & 4.9e+0 &  0/10\\
 120 & 0.3 &  54052 &  71730 & 0.0e+0 & 10/10   &  53695 & 143463 & 6.6e+0 &  0/10\\
 120 & 0.7 &  52145 &  71739 & 0.0e+0 & 10/10   &  52268 & 143478 & 6.5e+0 &  0/10\\
\hline
\end{tabular}
\end{table}

  \begin{table}[h!]
\caption{\small Results on model \eqref{relax} applied to simplified forest fire model, with $p = 0.9$.}
 \label{t2}\centering
\begin{tabular}{|c c||r r r|r r r r|}
\hline
& & \multicolumn{3}{c|}{$\sigma = 0$}
& \multicolumn{4}{c|}{$\sigma = 0.01$}\\
$k$ & $p_1$ & \multicolumn{1}{c}{\bf E$_{\rm orig}$} &\multicolumn{1}{c}{\bf err}& \multicolumn{1}{c|}{${\bf N}_{\rm rec}$}
& \multicolumn{1}{c}{\bf E$_{\rm orig}$} &
\multicolumn{1}{c}{\bf E$_{\rm noise}$} &\multicolumn{1}{c}{\bf err}& \multicolumn{1}{c|}{${\bf N}_{\rm rec}$}\\

\hline

 20 & 0.3 &   9183 & 1.0e-1 &  9/10 &   9105 &     39 & 1.0e-1 &  9/10 \\
 20 & 0.7 &   8652 & 4.4e-1 &  6/10 &   8797 &     39 & 4.0e-1 &  6/10 \\
 25 & 0.3 &  11622 & 3.0e-1 &  7/10 &  11495 &     61 & 1.0e-1 &  9/10 \\
 25 & 0.7 &  10645 & 5.4e-1 &  5/10 &  11160 &     61 & 3.0e-1 &  7/10 \\
 30 & 0.3 &  14183 & 2.0e-1 &  8/10 &  14254 &     89 & 2.0e-1 &  8/10 \\
 30 & 0.7 &  12889 & 7.0e-1 &  3/10 &  13063 &     89 & 5.4e-1 &  5/10 \\
 35 & 0.3 &  15602 & 2.0e-1 &  8/10 &  16814 &    121 & 1.0e-1 &  9/10 \\
 35 & 0.7 &  14162 & 8.4e-1 &  2/10 &  15858 &    121 & 4.4e-1 &  6/10 \\
 40 & 0.3 &  18474 & 5.4e-1 &  5/10 &  18343 &    158 & 1.0e-1 &  9/10 \\
 40 & 0.7 &  17592 & 6.4e-1 &  4/10 &  17020 &    158 & 1.0e+0 &  1/10 \\
 45 & 0.3 &  20263 & 3.0e-1 &  7/10 &  21037 &    200 & 3.0e-1 &  7/10 \\
 45 & 0.7 &  19714 & 7.4e-1 &  3/10 &  19266 &    201 & 1.1e+0 &  1/10 \\
\hline
\end{tabular}
\end{table}

\section{Conclusions}

We have considered the possibility of using convex relaxation to solve the
NP-hard problem of finding the set of $k$ most influential nodes in a social
network.  We restricted attention to the bipartite case, which is without
loss of generality when the arcs are deterministic.
We describe a generative model in which senders and receivers are both
divided into interest groups, each interest group has one influential sender,
and most of the arcs join senders in an interest group to receivers in the
same group.
Our theory
shows that for deterministic arcs, recovery of the influencers is possible
even with substantial noise. Recovery in the probabilistic model is also
possible with more stringent assumptions.  Our computational tests on the
forest-fire model, which is not covered by our theory, nonetheless exhibit
the results predicted by the theory.

The first question left by our work
is whether a stronger convex relaxation is possible
in the case of the probabilistic graph model.  S.~Ahmed pointed out in private
communication that while the problem is still in integer form, there are many
possible adjustments that could be made to the objective function before
passing to the convex relaxation; the adjustments could be chosen so that
the integer problem is not affected but the convex relaxation is stronger.

The second main question left by our work is whether a theoretical analysis
of the forest-fire model is possible.  As mentioned in the introduction,
this model is believed to correspond to real social networks much better
than the interest-group model developed herein.

The last question is whether the analysis can be extended to the nonbipartite
directed graph case.  An immediate difficulty with this case, assuming the
independent cascade model, is that there is apparently no closed-form expression
for the objective function (expected number of receivers reached by the $k$
senders) for the optimization problem.  Kempe, Kleinberg and Tardos deal with
this difficulty by using sampling.  In their context of approximation algorithms,
sampling is completely acceptable since it merely
creates a further approximation factor.  On the
the other hand, if one is aiming for the exact optimizer as we do, then it
is no longer apparent that sampling is an appropriate strategy.
\bibliographystyle{plain}
\bibliography{influence}
\end{document}